\def\p{\partial}
\def\R{\mathbb{R}}
\def\leq{\leqslant}
\newtheorem{theorem}{Theorem}[section]
\newtheorem{lemma}{Lemma}[section]
\begin{document}
\begin{frontmatter}

\title{A free boundary problem for a class of parabolic-elliptic type chemotaxis model
\tnoteref{sup}} \tnotetext[sup]{This work is supported by National Natural Science Foundation of China (Grant No. 11131005) and the Fundamental Research Funds for the Central Universities (Grant No. 2014201020202).}


\author[whu]{Hua Chen}
\ead{chenhua@whu.edu.cn}

\author[sxu]{Wenbin Lv}
\ead{lvwenbin@whu.edu.cn}

\author[whu]{Shaohua Wu}
\ead{wush8@sina.com}

\address[whu]{School of Mathematics and Statistics, Wuhan University,
              Wuhan 430072, China}

\address[sxu]{School of Mathematical Sciences, Shanxi University, Taiyuan 030006, China}

\begin{abstract}
In this paper, we study a free boundary problem for a class of
parabolic-elliptic type chemotaxis model in high dimensional symmetry domain $\Omega$. By
using the contraction mapping principle and operator semigroup approach, we
establish the existence of the solution for such kind of chemotaxis system in
the domain $\Omega$ with free boundary condition. Besides, we get the explicit formula for the free boundary and show the chemotactic collapse for the solution of the system.
\end{abstract}

\begin{keyword}{chemotaxis, free boundary problem, existence of solution.}

\MSC[2010] 35A01, 35K57, 35R35, 47D03.
\end{keyword}
\end{frontmatter}

\section{Introduction}

Understanding of the partially oriented movement of cells in response to chemical signals, chemotaxis, is of great significance in various contexts. This importance partly stems from the fact that when cells combined with the ability to produce the respective signal substance themselves, chemotaxis mechanisms are among the most primitive forms of intercellular communication. Typical examples include
aggregation processes such as slime mold formation in Dictyostelium Discoideum discovered by K.B. Raper \cite{K.B.R.}. A lot of mathematicians have made efforts to develop various models and investigated them from a viewpoint of mathematics. For a broad overview over various types of chemotaxis processes, we refer the reader to the surveys and papers \cite{C-W20071,C-W2008,C-W2011,C-Z2004,C-Z2005,C-Z2006,H-P,D.H.,K-S,W-Y,W-C-L2008,Y-C-L-S} and the references therein.

As we all know, in a standard setting for many partial differential equations, we usually assume that the process being described occurs in a fixed domain of the space. But in the real world, the following phenomenon may happen. At the initial state, a kind of amoeba occupy some areas. When foods become rare, they begin to secrete chemical substances on their own. Since the biological time scale is much slower than the chemical one, the chemical substances are full filled with whole domain and create a chemical gradient attracting the cells. In turn, the areas of amoeba may change according to the chemical gradient from time to time. In other words, a part of whose boundary is unknown in advance and that portion of the boundary is called a free boundary. In addition to the standard boundary conditions that are needed in order to solve the PDEs, an additional condition must be imposed at the free boundary. One then seeks to determine both the free boundary and the solution of the differential equations. The theory of free boundaries has seen great progress in the last thirty years; for the state of the field we refer to \cite{A.F.}.

In this paper, we consider the following high dimensional free boundary problem
of a chemotaxis model. Such kind of models can be found in \cite{C-L-W2015,C-L-W2016,C-W2007,C-W2012,W2010}, and we can
give more explanations in the appendix below.
\begin{equation}\label{19}
\begin{cases}
u_{t}=\nabla(\nabla u-u\nabla v),&\text{in}\quad\Omega_{t}\times(0,T),\\
u=0,&\text{in}\quad\Omega\times(0,T)\setminus\Omega_{t}\times(0,T),\\
\displaystyle -\nabla u\cdot\frac{\nabla\Phi}{|\nabla\Phi|}=ku,&\text{on}\quad\partial\Omega_{t}\times(0,T),\\
\displaystyle u\frac{\partial\Phi}{\partial t}=\nabla u\cdot\nabla\Phi-u\nabla v\cdot\nabla\Phi,&\text{on}\quad\partial\Omega_{t}\times(0,T),\\
u(x,0)=u_{0}(x),&\text{in}\quad\Omega_{0},\\
0=\Delta v+u-c,&\text{in}\quad\Omega\times(0,T),\\
\displaystyle \frac{\partial v}{\partial \mathbf{n}}=0,&\text{on}\quad\partial\Omega\times(0,T),
\end{cases}
\end{equation}
where
\begin{itemize}
  \item $\Omega\subset\mathbb{R}^n$ is a bounded open set with smooth boundary $\partial\Omega$ and $\mathbf{n}$ is unit outer normal vector of $\partial\Omega$. Besides, $\Omega$ is assumed as a symmetry domain, i.e. if $x\in\Omega$ then $-x\in\Omega$ as well;
  \item $c$ is a constant which will be determined in the following section;
  \item $k(x,t)=k(|x|,t)$ is radial symmetric and satisfying the Lipschitz condition on $|x|$, namely there exists a constant $L>0$, such that
\begin{equation}\label{103}
\big|k(|x_1|,t)-k(|x_2|,t)\big|\leqslant L\big||x_1|-|x_2|\big|,~x_1,x_2\in\Omega_t.
\end{equation}
Also, $k(x,t)$ is bounded on $t\in[0,+\infty)$. In other words, there exists a constant $C>0$ which depends on $x$, such that
\begin{equation}\label{105}
|k(x,\,t)|\leqslant C,~t\in[0,+\infty);
\end{equation}
  \item $u=u(x,\,t)$ is an unknown function of $(x,\,t)\in\Omega_t\times(0,\,T)$ and it stands for the density of cellular slime molds. In other words, the density $u(x,\,t)$ occupies the domain $\Omega_t$, an open subset of $\Omega$, in time $t$ and $u(x,t)=0$ in the outside of $\Omega_t$;
  \item $v=v(x,\,t)$ is an unknown function of $(x,\,t)\in\Omega\times(0,\,T)$ and it stands for the concentration of chemical substances secreted by the slime molds;
  \item $\Gamma_t:~\Phi(x,t)=0$ is an unknown free boundary.
\end{itemize}
For general smooth domain $\Omega$, the system (\ref{19}) is based on the well-known chemotaxis model with fixed boundary
\begin{equation}\label{20}
\begin{cases}
u_t=\nabla(\nabla u-u\nabla v),&\text{in}\quad\Omega\times(0,\,T),\\
0=\Delta v+u-c,&\text{in}\quad \Omega\times(0,\,T),\\
\displaystyle \frac{\partial u}{\partial \mathbf{n}}=\frac{\partial v}{\partial \mathbf{n}}=0,&\text{on}\quad\partial\Omega\times(0,T),\\
u(x,0)=u_{0}(x),&\text{in}\quad\Omega.
\end{cases}
\end{equation}
which was introduced by W. J\"{a}ger and S. Luckhaus~\cite{J-L} who first studied the conjectrues of Childress and Percus in 1992. Later, the problem \eqref{20} is intensively studied by many authors (see for instance
\cite{C-M,M.H.,H-M-V1997,H-M-V1998,J-L}). Concerning the problem, classical results state that
\begin{itemize}
  \item if n=2
   \begin{itemize}
     \item there exists a critical number $C(\Omega)$ such that $c\leqslant C(\Omega)$ implies that there exists a unique, smooth positive solution to \eqref{20} globally in time \cite{J-L};
     \item $\Omega$ is a disk, there exists a positive number $C$ such that $c\geqslant C$ implies radially symmetric positive initial values can be constructed such that explosion of the solution happens in the center of the disc in finite time \cite{J-L};
     \item $\Omega=\R^2$, then no radial, self-similar solutions of \eqref{20} exist such that $\int_{|x|\leqslant r}u(T,s)ds<\infty$ as $r\rightarrow0$ \cite{M.H.,H-M-V1997};
   \end{itemize}
  \item if $n\geqslant 3$
   \begin{itemize}
     \item $\Omega=\R^3$ for any $T>0$ and any constant $C>0$, the radial solution $(u,v)$ of \eqref{20} that is smooth for all times $0<t<T$, blows up at $r=0$ and $t=T$, and is such that $\int_{|x|\leqslant r}u(T,s)ds\rightarrow C$ \cite{M.H.,H-M-V1997,H-M-V1998};
     \item  $\Omega$ is a ball and some additional conditions, there exists a unique maximal classical solution with finite maximal existence time $T$ \cite{C-M}.
   \end{itemize}
\end{itemize}

In one dimensional case, if $k$ is a positive constant H. Chen and S.H. Wu \cite{C-W2007,C-W2012,W2010} studied the similar free boundary value problem (\ref{19}) and established the existence and uniqueness of the solution for the system (\ref{19}). However, to the best of our knowledge, high dimensional case for the free boundary value problem (\ref{19}) will be more important. In view of the biological relevance of the particular case $n=3$, we find it worthwhile to clarify these questions. In the present paper, we consider the system (\ref{19}) in a high dimensional symmetry domain $\Omega$. In addition, the condition that $k$ is a positive constant in \cite{C-W2007,C-W2012,W2010} seems too strict. So it is also worthwhile to consider the system with non-constant coefficient $k$.

This paper is arranged as follows. In section \ref{sec2}, we rewrite the model and present the main result of the paper. In section \ref{sec3}, we use the operator semigroup approach to establish some estimates which are essential in the proof of the main result. In section \ref{sec4}, we shall give the proof of the main result. In addition, we study the property of the free boundary in section \ref{sec5}.

\section{Main result}\label{sec2}

\subsection{Rewrite the model}

In this subsection, we rewrite the model with the form of radial symmetry. We assume that the environment and solution are radially symmetric. Without loss of generality, we assume that $\Omega=B_1(0)$ which represents a unite ball centered in origin and that $u$ and $v$ are radially symmetric with respect to $x=0$. The free boundary can be written as $r=|x|=h(t)$ and $h(0)=b$. Hence, the density $u(x,0)$ occupies the domain $\Omega_0=B_b(0)$ at the initial time $t=0$.

Let
$(\widetilde{u},\widetilde{v})$ denote the corresponding radial solution in $B_1(0)\times(0,T)$. In order to avoid confusion, we may write
\begin{equation*}
\widetilde{u}(r,t)=u(x,t),\quad \widetilde{v}(r,t)=v(x,t),
\end{equation*}
 with $r=|x|=(x_1^2+\cdots+x_n^2)^{\frac{1}{2}}\in (0,1)$ and
$$\frac{\partial r}{\partial x_i}=\frac{\partial}{\partial x_i}\left((x_1^2+\cdots+x_n^2)^{\frac{1}{2}}\right)=\frac{x_i}{r},\quad i=1,2,\cdots,n.$$
A simple calculation shows that
\begin{equation}\label{21}
u_{x_i}(x,t)=\widetilde{u}_r(r,t)\frac{x_i}{r},\quad v_{x_i}=\widetilde{v}_r(r,t)\frac{x_i}{r},
\end{equation}
\begin{equation}\label{24}
u_{x_ix_i}(x,t)=\widetilde{u}_{rr}(r,t)\frac{x_i^2}{r^2}+\widetilde{u}_r(r,t)\left(\frac{1}{r}-\frac{x_i^2}{r^3}\right),
\end{equation}
and
\begin{equation}\label{25}
v_{x_ix_i}(x,t)=\widetilde{v}_{rr}(r,t)\frac{x_i^2}{r^2}+\widetilde{v}_r(r,t)\left(\frac{1}{r}-\frac{x_i^2}{r^3}\right).
\end{equation}

\noindent $\bullet$ Reformulation of the boundary.

If $\Gamma_t: \widetilde{\Phi}(r,t)=0\Leftrightarrow r-h(t)=0$, then the condition of the free boundary converts into
\begin{equation*}
\widetilde{k}\big(h(t),t\big)\widetilde{u}\big(h(t),t\big)+\widetilde{u}_r\big(h(t),t\big)=0,
\end{equation*}
and
\begin{equation*}
-\widetilde{u}\big(h(t),t\big)h_t(t)=\widetilde{u}_r\big(h(t),t\big)-\widetilde{u}\big(h(t),t\big)\widetilde{v}_r\big(h(t),t\big).
\end{equation*}

Actually, substituting (\ref{21}) and
$$\Phi_{x_i}(x,t)=\frac{\partial \widetilde{\Phi}}{\partial r}\cdot\frac{\partial r}{\partial x_i}=\frac{x_i}{r},\quad i=1,2,\cdots,n,$$
into the third and forth equations of (\ref{19}), we can easily get
$$\widetilde{k}(r,t)\widetilde{u}(r,t)=-\nabla u\cdot\frac{\nabla\Phi}{|\nabla\Phi|}=-\sum_{i=1}^n\widetilde{u}_r(r,t)\frac{x_i^2}{r^2}=-\widetilde{u}_r(r,t),$$
and
\begin{equation}\label{26}
\begin{aligned}
-\widetilde{u}(r,t)h_t(t)=&u\frac{\partial\Phi}{\partial t}=\nabla u\cdot\nabla\Phi-u\nabla v\cdot\nabla\Phi=\widetilde{u}_r(r,t)-\widetilde{u}(r,t)\widetilde{v}_r(r,t),
\end{aligned}
\end{equation}
on $\Gamma_t$.

If $u>0$ on $\Gamma_t$, then (\ref{26}) is equivalent to
\begin{equation*}
h_t(t)=\widetilde{k}\big(h(t),t\big)+\widetilde{v}_r\big(h(t),t\big).
\end{equation*}

\noindent $\bullet$ Reformulation of the equation.

Substituting (\ref{21}), (\ref{24}) and (\ref{25}) into the first and sixth equations of (\ref{19}), we can easily obtain
\begin{align*}
\widetilde{u}_{t}(r,t)&=u_{t}(x,t)=\Delta u-\nabla u\nabla v-u\Delta v\\
&=\widetilde{u}_{rr}(r,t)+\frac{n-1}{r}\widetilde{u}_r(r,t)-\widetilde{u}_r(r,t)\widetilde{v}_r(r,t)\\
&\quad\quad-\widetilde{u}(r,t)\left(\widetilde{v}_{rr}(r,t)+\frac{n-1}{r}\widetilde{v}_r(r,t)\right)\\
&=r^{1-n}\left(r^{n-1}\widetilde{u}_r(r,t)\right)_r-r^{1-n}\left(r^{n-1}\widetilde{u}(r,t)\widetilde{v}_r(r,t)\right)_r,
\end{align*}
\begin{equation}\label{1-1}
\text{or}\quad r^{n-1}\widetilde{u}_{t}(r,t)=\big(r^{n-1}\widetilde{u}_r(r,t)-r^{n-1}\widetilde{u}(r,t)\widetilde{v}_r(r,t)\big)_r,
\end{equation}
and
\begin{align*}
0&=\Delta v+u-c\\
&=\widetilde{v}_{rr}(r,t)+\frac{n-1}{r}\widetilde{v}_r(r,t)+\widetilde{u}(r,t)-c\\
&=r^{1-n}\left(r^{n-1}\widetilde{v}_r(r,t)\right)_r+\widetilde{u}(r,t)-c,
\end{align*}
\begin{equation}\label{1-2}
\text{or}\quad 0=\big(r^{n-1}\widetilde{v}_r(r,t)\big)_r+r^{n-1}\widetilde{u}(r,t)-r^{n-1}c.
\end{equation}
Therefore, the model we are concerned here becomes
\begin{equation}\label{1}
\begin{cases}
\widetilde{u}_{t}(r,t)=\widetilde{u}_{rr}(r,t)+\dfrac{n-1}{r}\widetilde{u}_r(r,t)\\
\quad\quad\quad\quad-\widetilde{u}_r(r,t)\widetilde{v}_r(r,t)-\widetilde{u}(r,t)\left(\widetilde{v}_{rr}(r,t)\right.\\
\quad\quad\quad\quad\left.+\dfrac{n-1}{r}\widetilde{v}_r(r,t)\right),&0<r<h(t),\quad 0<t<T,\\
\widetilde{u}(r,t)=0,&h(t)<r<1,\quad 0<t<T,\\
\widetilde{u}_r(0,t)=0,&0<t<T,\\
\widetilde{u}_r(h(t),t)+\widetilde{k}(h(t),t)\widetilde{u}(h(t),t)=0,&0<t<T,\\
h_t(t)=\widetilde{k}(h(t),t)+\widetilde{v}_r(h(t),t),&0<t<T,\\
\widetilde{u}(r,0)=\widetilde{u}_{0}(r),&0<r<b,\\
0=\widetilde{v}_{rr}(r,t)+\dfrac{n-1}{r}\widetilde{v}_r(r,t)+\widetilde{u}(r,t)-c,&0<r<1,\quad 0<t<T,\\
\widetilde{v}_r(0,t)=0,&0<t<T,\\
\widetilde{v}_r(1,t)=0,&0<t<T,
\end{cases}
\end{equation}
which corresponds to the equation with normal coordinate
\begin{equation}\label{101}
\begin{cases}
u_{t}=\nabla(\nabla u-u\nabla v),&\text{in}\quad B_{h(t)}(0)\times(0,T),\\
u=0,&\text{in}\quad\big(B_1(0)\setminus \overline{B_{h(t)}(0)}\big)\times(0,T),\\
\displaystyle \frac{\p u}{\p \mathbf{n}}(x,t)+k(x,t)u(x,t)=0,&\text{on}\quad\partial B_{h(t)}(0)\times(0,T),\\
\displaystyle h_t(t)=k(x,t)+\frac{\p v}{\p \mathbf{n}}(x,t),&\text{on}\quad\partial B_{h(t)}(0)\times(0,T),\\
u(x,0)=u_{0}(x),&\text{in}\quad B_b(0),\\
0=\Delta v+u-c,&\text{in}\quad B_1(0)\times(0,T),\\
\displaystyle \frac{\partial v}{\partial \mathbf{n}}(x,t)=0,&\text{on}\quad\partial B_1(0)\times(0,T).
\end{cases}
\end{equation}

\subsection{Main result}

Now we introduce the following space notations, which will be used in the main result. For $T>0$, we define
\begin{gather*}
X_{u}^{1}(T)=L^\infty\left((0,T],H^{1,p}\big(B_{h(t)}(0)\big)\cap\left\{\frac{\p u}{\p \mathbf{n}}(x,t)+k(x,t)u(x,t)=0,x\in\p B_{h(t)}(0)\right\}\right),\\
X_{v}^{2}(T)=L^\infty\left((0,T],H^{2,p}\big(B_1(0)\big)\cap\left\{\frac{\p v}{\p \mathbf{n}}(x,t)=0,x\in\p B_1(0)\right\}\right),\\
Y_v(T)=\left\{v~\bigg|~\frac{1}{|B_1(0)|}\int_{B_1(0)} vdx=0\right\}.
\end{gather*}

Our main results are:
\begin{theorem}\label{th1}
Assume $k(x,t)$ is radial symmetric on $x$ and satisfies the conditions (\ref{103}) and (\ref{105}) and
$$c=nM,\quad M=\frac{\Gamma(\frac{n}{2})}{2\pi^{\frac{n}{2}}}\int_{B_b(0)}u_0(x)dx,$$
where $n$ is the dimension of the space. If
$$u_{0}(x)\in H^{1,p}\big(B_b(0)\big),~u_{0}>0,$$
are radial symmetric on $x$, where $0<b<1$ and $b$ is a constant. Then there exist $T>0$ small enough, a radial symmetric pair
$$\big(u(x,t),v(x,t)\big)\in (X_u^{1}(T))\times (X_{v}^{2}(T)\cap Y_v(T)),$$
and a curve $\Gamma_t:|x|=h(t)$, which are the solutions of \eqref{101} for $n<p$.
\end{theorem}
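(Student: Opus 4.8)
\medskip
\noindent The plan is to flatten the moving domain by a radial rescaling, use the parabolic-elliptic structure to express $v$ in terms of $u$, and then realize $(u,h)$ as a fixed point of a contraction on a short time interval, the parabolic smoothing being supplied by the analytic semigroup generated by the Robin realization of the Laplacian on a fixed ball.

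\emph{Reduction to a fixed domain with $v$ eliminated.} For $T$ small one expects $h$ to stay close to $b$, hence bounded away from $0$ and $1$, so $y=\frac{b}{h(t)}x$ maps $B_{h(t)}(0)$ diffeomorphically onto $B_b(0)$. Writing $U(y,t)=u\big(\tfrac{h(t)}{b}y,t\big)$ and using $\Delta v=c-u$, the first equation of \eqref{101} becomes
\begin{equation*}
\partial_tU=\frac{b^2}{h^2}\,\Delta_yU+\frac{h_t}{h}\,y\cdot\nabla_yU-\frac{b}{h}\,\nabla_yU\cdot\nabla_xv+U^2-cU\qquad(\nabla_xv\ \text{at}\ x=\tfrac{h(t)}{b}y),
\end{equation*}
the Robin condition becomes $\frac{b}{h}\,\partial_{\mathbf{n}}U+\widetilde{k}(h,t)\,U=0$ on $\partial B_b(0)$, while the free-boundary law keeps the radial form $h_t=\widetilde{k}(h(t),t)+\widetilde{v}_r(h(t),t)$, $h(0)=b$; radial symmetry of $u_0$ and $k$ keeps everything in the radial class. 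As the system is parabolic-elliptic, $v$ is slaved to $u$: for each $t$ the Neumann problem $\Delta v=c-u$ in $B_1(0)$ ($u$ extended by $0$ off $B_{h(t)}(0)$), $\partial_{\mathbf{n}}v=0$ on $\partial B_1(0)$, normalized by $v\in Y_v(T)$, has the unique solution $v=v[u]$, the hypothesis $c=nM$ being exactly its solvability condition $\int_{B_1(0)}(u-c)\,dx=0$, i.e. conservation of the mass $\int u\,dx$ that the boundary conditions of \eqref{19} enforce. Elliptic $L^p$-theory then gives $\|v[u](\cdot,t)\|_{H^{2,p}(B_1(0))}\leq C(\|u(\cdot,t)\|_{L^p}+1)$, and since $p>n$ the embedding $H^{2,p}\hookrightarrow C^1$ controls $\nabla v$ pointwise and renders the trace $\widetilde{v}_r(h(t),t)$ meaningful --- one of the two places the hypothesis $p>n$ is needed.

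\emph{The contraction.} Fix a closed ball $\mathcal M_T\subset L^\infty((0,T];H^{1,p}(B_b(0)))\times C^1([0,T])$ --- say $\sup_{0<t\leq T}\|U(\cdot,t)\|_{H^{1,p}}\leq R$, $h(0)=b$, $\|h-b\|_{C[0,T]}\leq\delta<\min(b,1-b)$, $\|h_t\|_{L^\infty}\leq K$, with $R,K$ determined by $\|u_0\|_{H^{1,p}}$ and $\delta$ to be small --- equipped with the weaker metric of $L^\infty((0,T];L^p)\times C([0,T])$, in which $\mathcal M_T$ is complete (an $L^\infty_tH^{1,p}$-ball is closed in $L^\infty_tL^p$, a $C^1$-ball closed in $C^0$). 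Define $\mathcal F(U,h)=(\bar U,\bar h)$ by: solve the elliptic problem for $V=v[u]$ with $u(x,t):=U(\tfrac{b}{h(t)}x,t)$; let $\bar U$ solve the \emph{linear} equation above (with $h,h_t,V$ frozen as data) through Duhamel,
\begin{equation*}
\bar U(t)=e^{tA}u_0+\int_0^te^{(t-s)A}\Big[\Big(\tfrac{b^2}{h^2}-1\Big)\Delta U+\tfrac{h_t}{h}\,y\cdot\nabla U-\tfrac{b}{h}\,\nabla U\cdot\nabla V+U^2-cU\Big](s)\,ds ,
\end{equation*}
$A$ being the Robin-Laplacian on $B_b(0)$; and set $\bar h(t)=b+\int_0^t\big(\widetilde{k}(h(s),s)+\widetilde{V}_r(h(s),s)\big)\,ds$. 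The smoothing bounds $\|e^{tA}\|_{L^p\to H^{1,p}}\leq Ct^{-1/2}$ and $\|e^{tA}u_0\|_{H^{1,p}}\leq C\|u_0\|_{H^{1,p}}$ (the content of Section~\ref{sec3}), the elliptic bound $\|\nabla V\|_{L^\infty}\leq C(\|U\|_{L^p}+1)$, the products $\|U\nabla V\|_{L^p},\ \|U^2\|_{L^p}\leq C\|U\|_{H^{1,p}}(\|U\|_{H^{1,p}}+1)$ (using $H^{1,p}\hookrightarrow L^\infty$, $p>n$), and the control of $\tfrac{b^2}{h^2}-1$ and $\tfrac{h_t}{h}$ by $\delta,K$ show $\mathcal F$ maps $\mathcal M_T$ into itself for $T$ small, the Duhamel integral contributing an $O(\sqrt T)$ and $\bar h-b$ an $O(T)$. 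The contraction estimate in the weak metric uses that $U\mapsto v[U]$ is affine (and Lipschitz in $h$) and that $h\mapsto\widetilde{k}(h,\cdot),\widetilde{v}_r(h,\cdot)$ are Lipschitz, by \eqref{103}, the trace theorem, and boundedness of $\widetilde{v}_{rr}$ near $r\approx b$; the $h$-integral again gains a factor $O(T)$, and contracting in the $C^0$- rather than the $C^1$-metric on $h$ is essential, since $\bar h_t=\widetilde{k}+\widetilde{v}_r$ carries no smallness. A fixed point $(U,h)$, unique in $\mathcal M_T$, results; de-scaling and setting $v=v[u]$ gives the radial triple $(u,v)\in X_u^1(T)\times(X_v^2(T)\cap Y_v(T))$ and the curve $|x|=h(t)$.

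\emph{Expected main difficulty.} After flattening, the oblique boundary condition $\frac{b}{h(t)}\partial_{\mathbf{n}}U+\widetilde{k}(h(t),t)\,U=0$ is genuinely time-dependent --- through $h(t)$ and through $k$ on the moving boundary --- so one cannot use a single autonomous semigroup: either one works with the semigroup of the operator frozen at $t=0$ and moves the (small, lower-order) difference of boundary conditions into the Duhamel forcing, which requires careful trace estimates, or one builds a non-autonomous evolution family and re-proves the smoothing bounds for it. Equally delicate is closing the loop for $h$: the ODE $h_t=\widetilde{k}+\widetilde{v}_r$ needs $\widetilde{v}_r$ on the free boundary to depend Lipschitz-continuously on $h$, which costs one derivative of $v$ more than the $L^p$-norm of $u$ controls directly, and is again why $p>n$ is indispensable. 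I expect the bulk of the work --- in particular the semigroup estimates of Section~\ref{sec3} --- to sit precisely at these two points.
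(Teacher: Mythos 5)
Your overall architecture---flatten the moving ball, slave $v$ to $u$ through the elliptic equation with $c=nM$ as the compatibility condition, and run a short-time contraction off the smoothing of a Robin Laplacian---is the same as the paper's, but there is one step that fails as written and one key structural fact you miss. The failing step is the treatment of $\bigl(\tfrac{b^2}{h^2}-1\bigr)\Delta U$ as a Duhamel forcing for the \emph{autonomous} semigroup $e^{tA}$, claimed to contribute $O(\sqrt{T})$. This cannot close in $L^\infty_t H^{1,p}$: the term carries two derivatives of the unknown, so you either pay $\|U\|_{H^{2,p}}$ (not controlled on your iteration ball) or, commuting $\Delta$ with the semigroup and using smoothing, you pay $(t-s)^{-1}$, which is not integrable; the smallness of $b^2/h^2-1$ multiplies a divergent time integral and does not help. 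Your ``expected main difficulty'' paragraph flags the time dependence of the \emph{boundary condition} but not of the \emph{principal part}, and it is the latter that breaks the frozen-operator scheme. The paper avoids this by not freezing: after rescaling it keeps the full $h^{-2}(t)\Delta$ as generator and uses the evolution family $e^{\alpha(t,s)\Delta}$ with $\alpha(t,s)=\int_s^t h^{-2}(\tau)\,d\tau$, a mere time reparametrization of one analytic semigroup, so the smoothing bound \eqref{12} holds with constants uniform over $h\in B$; only genuinely lower-order discrepancies are left in Duhamel. You should switch to this (your own ``non-autonomous evolution family'' alternative) rather than the frozen-$A$ formula you actually wrote down.

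The missed structural fact is the paper's Lemma \ref{lm4}: integrating the radial elliptic equation over $(0,h(t))$ and using conservation of mass $\int_0^{h(t)}r^{n-1}\widetilde{u}\,dr=M$ gives the \emph{explicit} formula $\widetilde{v}_r\bigl(h(t),t\bigr)=Mh(t)-Mh(t)^{1-n}$. Thus the trace of $\nabla v$ on the free boundary depends on $h(t)$ alone, not on the profile of $u$, and the free-boundary law $h_t=\widetilde{k}(h,t)+\widetilde{v}_r(h,t)$ has an explicit, locally Lipschitz right-hand side. This disposes of the second difficulty you identify (Lipschitz dependence of the trace $\widetilde{v}_r$ on $h$, which you propose to extract from a trace theorem plus bounds on $\widetilde{v}_{rr}$) at no regularity cost, and it is what lets the paper organize the argument as two nested fixed points---an inner contraction in $u$ for each fixed $h\in B$ (Lemma \ref{lm3}) and an outer contraction in $h$ (Section \ref{sec4})---instead of your simultaneous contraction in $(U,h)$. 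The simultaneous scheme is viable, but only after the semigroup issue above is repaired and with this lemma (or an equivalent estimate) supplied to close the $h$-loop.
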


Concerning the free boundary, we have the following properties:

\begin{theorem}\label{th2}
Assume $1<n<p$, $k(x,t)=a|x|$ and $a>0$ is a constant. Then we have $$h(t)=\left[\left(b^n-\frac{M}{a+M}\right)e^{n(a+M)t}+\frac{M}{a+M}\right]^{\frac{1}{n}}.$$
Furthermore, if $M<\frac{ab^n}{1-b^n}$, then $h(t)$ is increasing and
\[h(t)\rightarrow 1~\text{when}~t\rightarrow \frac{1}{n(a+M)}\ln\frac{a}{(a+M)b^n-M}.\]
If $M>\frac{ab^n}{1-b^n}$, then $h(t)$ is decreasing and
\[h(t)\rightarrow0~\text{as}~t\rightarrow \frac{1}{n(a+M)}\ln\frac{M}{(1-b^n)M-ab^n}.\]
If $M=\frac{ab^n}{1-b^n}$, then $h(t)\equiv b$.
\end{theorem}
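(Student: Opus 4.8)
The plan is to turn the fifth (kinematic) condition in \eqref{1} into a single autonomous ODE for $h(t)$, solve it in closed form, and then read off the monotonicity and the blow-up/collapse behaviour. The first ingredient is conservation of mass. Writing the evolution equation in the divergence form \eqref{1-1} and integrating over $r\in(0,h(t))$, the term at $r=0$ drops out because of the factor $r^{n-1}$ together with $\widetilde u_r(0,t)=0$, so that $\int_0^{h(t)}r^{n-1}\widetilde u_t\,dr=h(t)^{n-1}\big(\widetilde u_r-\widetilde u\,\widetilde v_r\big)\big|_{r=h(t)}$. Differentiating the moment $m(t):=\int_0^{h(t)}r^{n-1}\widetilde u(r,t)\,dr$ by the Leibniz rule for a moving domain produces precisely this boundary term plus $h(t)^{n-1}\widetilde u(h(t),t)h_t(t)$, and the kinematic condition \eqref{26} says exactly that these two cancel. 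Hence $m(t)$ is constant in $t$, and by the definition of $M$ it equals $M$ for all $t$.

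Next I would solve the elliptic equation for $v$. Integrating \eqref{1-2} from $0$ to $r$ with $\widetilde v_r(0,t)=0$ gives $r^{n-1}\widetilde v_r(r,t)=-\int_0^r s^{n-1}\widetilde u(s,t)\,ds+\tfrac{c}{n}r^n$; evaluating at $r=h(t)$, using $m(t)=M$ and $c=nM$, yields $\widetilde v_r(h(t),t)=M\big(h(t)^n-1\big)h(t)^{1-n}$. Substituting this and $\widetilde k(h(t),t)=a\,h(t)$ into the free-boundary equation gives the autonomous ODE
\[
h_t(t)=(a+M)\,h(t)-M\,h(t)^{1-n},\qquad h(0)=b .
\]
Multiplying by $n\,h^{n-1}$ and setting $y=h^n$ linearises it to $y'=n(a+M)y-nM$ with $y(0)=b^n$, whose unique solution is $y(t)=\big(b^n-\tfrac{M}{a+M}\big)e^{n(a+M)t}+\tfrac{M}{a+M}$. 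Taking $n$-th roots yields the claimed formula for $h(t)$.

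The qualitative statements then follow by inspecting the sign of $\alpha:=b^n-\tfrac{M}{a+M}$, which, since $0<b<1$, is positive, negative, or zero according to whether $M<\tfrac{ab^n}{1-b^n}$, $M>\tfrac{ab^n}{1-b^n}$, or $M=\tfrac{ab^n}{1-b^n}$. If $\alpha>0$, then $y$ is strictly increasing, hence so is $h$, and solving $y(t)=1$ gives $h(t)\to1$ at the time $\tfrac{1}{n(a+M)}\ln\tfrac{a}{(a+M)b^n-M}$; if $\alpha<0$, then $y$ is strictly decreasing and reaches $y=0$ — the chemotactic collapse — at the time $\tfrac{1}{n(a+M)}\ln\tfrac{M}{(1-b^n)M-ab^n}$; if $\alpha=0$, then $y\equiv b^n$, i.e.\ $h\equiv b$.

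The computation is otherwise elementary, so I expect the only real obstacle to be the rigorous justification of the first two steps: one must know that the solution produced in Theorem \ref{th1} is regular enough — in particular $h\in C^1$, and $\widetilde u,\widetilde v$ possessing the traces on $r=h(t)$ and the integrability needed to integrate by parts and to differentiate $m(t)$ as a moving-domain integral — for the mass identity and the formula for $\widetilde v_r(h(t),t)$ to be legitimate. Granting that regularity, Theorem \ref{th2} reduces to the explicit ODE solution above.
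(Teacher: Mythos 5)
Your proposal is correct and follows essentially the same route as the paper: it reproduces the mass-conservation identity (the paper's Lemma 3.1), the formula $\widetilde v_r(h(t),t)=Mh(t)-Mh(t)^{1-n}$ (the paper's Lemma \ref{lm4}), and then the same substitution $s=h^n$ to linearise the resulting ODE and read off the three cases from the sign of $b^n-\tfrac{M}{a+M}$. The only difference is that you derive the two auxiliary identities inline rather than citing them as lemmas, and you explicitly flag the regularity needed to justify them, which the paper leaves implicit.
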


Concerning the global existence and blow-up in finite time for the solution, we have the following result:

\begin{theorem}\label{th3}
Assume $1<n<p$, $k(x,t)=a|x|$ and $a>0$ is a constant. If $M\leqslant\frac{ab^n}{1-b^n}$, then the solution $(u,v)$ is a global solution. If $M>\frac{ab^n}{1-b^n}$, then the system \eqref{101} collapses in a finite time $T^*=\frac{1}{n(a+M)}\ln\frac{M}{(1-b^n)M-ab^n}$. More precisely
$$u(x,t)\rightarrow \frac{2\pi^{\frac{n}{2}}}{\Gamma(\frac{n}{2})}M\delta(x)~\text{as}~t\rightarrow T^*.$$
\end{theorem}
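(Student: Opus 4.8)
The plan is to derive everything from the explicit formula for $h(t)$ in Theorem~\ref{th2} together with conservation of mass. Writing $m(r,t)=\int_0^r s^{n-1}\widetilde u(s,t)\,ds$, integration of \eqref{1-2} gives $r^{n-1}\widetilde v_r(r,t)=Mr^n-m(r,t)$, while the free boundary conditions in \eqref{1} force $\frac{d}{dt}m(h(t),t)=0$, so that $m(h(t),t)\equiv m(b,0)=\frac{\Gamma(n/2)}{2\pi^{n/2}}\int_{B_b(0)}u_0\,dx=M$; equivalently $\int_{B_{h(t)}(0)}u(x,t)\,dx=\int_{B_b(0)}u_0\,dx=\frac{2\pi^{n/2}}{\Gamma(n/2)}M$ for all $t$. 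Inspecting the formula of Theorem~\ref{th2}, $M\le\frac{ab^n}{1-b^n}$ is equivalent to $b^n-\frac{M}{a+M}\ge0$, which makes $h(t)^n$ non-decreasing, so $h(t)\ge b>0$ for all $t$; whereas $M>\frac{ab^n}{1-b^n}$ makes $h(t)^n$ strictly decreasing with $h(t)\to0$ precisely as $t\to T^*=\frac{1}{n(a+M)}\ln\frac{M}{(1-b^n)M-ab^n}$.

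For the global case $M\le\frac{ab^n}{1-b^n}$, since $h(t)\ge b>0$ the moving domain never degenerates at the origin, and I would extend the local solution of Theorem~\ref{th1} by iterating it on consecutive short time intervals. It suffices to show that $\|u(\cdot,t)\|_{H^{1,p}(B_{h(t)}(0))}$ cannot blow up while $h$ stays in a compact subinterval of $(0,1]$, for then the local existence time provided by Theorem~\ref{th1} stays bounded below and the solution continues indefinitely. The required a priori bound I would extract from: $u\ge0$ together with the conserved mass above; elliptic regularity for $0=\Delta v+u-c$ under the Neumann condition, in particular the pointwise identity $\widetilde v_r(r,t)=Mr-r^{1-n}m(r,t)$ with $0\le m\le M$, which bounds the chemotactic drift; and the analytic semigroup estimates of Section~\ref{sec3} applied to the Duhamel representation of $u$. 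When $M=\frac{ab^n}{1-b^n}$ one has $h\equiv b$ and this already gives a solution for all $t\ge0$. When $M<\frac{ab^n}{1-b^n}$, $h$ increases and reaches $1$ at the finite time $t_1=\frac{1}{n(a+M)}\ln\frac{a}{(a+M)b^n-M}$; for $t\ge t_1$ the free boundary coincides with $\partial B_1(0)$, the problem reduces to the fixed-boundary system \eqref{20} on $B_1(0)$ with $c=nM$, and the same estimates yield a solution for all time.

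For the collapse case $M>\frac{ab^n}{1-b^n}$, the continuation argument above produces a solution on $[0,T^*)$. For $t<T^*$ the density $u(\cdot,t)$ is supported in $B_{h(t)}(0)$, so for every $\varphi\in C(\overline{B_1(0)})$,
\[
\Bigl|\int_{B_1(0)}u(x,t)\varphi(x)\,dx-\frac{2\pi^{n/2}}{\Gamma(n/2)}M\,\varphi(0)\Bigr|\le \frac{2\pi^{n/2}}{\Gamma(n/2)}M\sup_{|x|\le h(t)}|\varphi(x)-\varphi(0)|\longrightarrow 0
\]
as $t\to T^*$, because $h(t)\to0$ and $\varphi$ is uniformly continuous; hence $u(\cdot,t)\rightharpoonup \frac{2\pi^{n/2}}{\Gamma(n/2)}M\,\delta(x)$ in the sense of measures. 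Since the total mass is fixed while $|B_{h(t)}(0)|\to0$, one also has $\|u(\cdot,t)\|_{L^\infty}\to\infty$, so the solution cannot be extended past $T^*$ and $T^*$ is its maximal existence time.

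The free-boundary behaviour and the concentration limit are then immediate; the real obstacle is the a priori estimate in the global case, i.e.\ that $\|u(\cdot,t)\|_{H^{1,p}}$ does not blow up while $h$ remains in a compact subinterval of $(0,1]$. This is delicate because the radial operator is singular at $r=0$ and the drift $u\nabla v$ intensifies as $u$ concentrates near the origin; it must be controlled via conservation of mass, the elliptic bound on $\nabla v$ in terms of $u$, and the semigroup estimates of Section~\ref{sec3}, iterated over time steps whose length stays bounded below as long as $h\ge\delta>0$.
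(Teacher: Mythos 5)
Your proposal is correct and follows essentially the same route as the paper: read off the behaviour of $h(t)$ from the explicit formula of Theorem~\ref{th2}, continue the local solution of Theorem~\ref{th1} as long as $h$ stays away from $0$, and combine mass conservation with $h(t)\to 0$ to get concentration at the origin; your test-function verification of $u(\cdot,t)\rightharpoonup \frac{2\pi^{n/2}}{\Gamma(n/2)}M\delta$ is actually more explicit than the paper's, which simply asserts it. The a~priori $H^{1,p}$ bound you flag as the ``real obstacle'' in the global case is likewise not supplied in the paper, which invokes ``the extensive method'' without providing the estimate, so your honesty here does not put you behind the published argument.
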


\section{Some crucial estimates}\label{sec3}

In this section, we establish some crucial estimates, which will play the key roles in proving the local existence of radial symmetric solution of the system \eqref{101} in high dimensional case.

\subsection{Some basic properties of the solution}

\begin{lemma}
If $u_0(x)>0$ and $(u,v)$ is the radial symmetrical solution of the system \eqref{101}, then we have $u>0$ and
$$\int_{B_{h(t)}(0)}u(x,t)dx=\int_{B_b(0)}u_0(x)dx.$$
\end{lemma}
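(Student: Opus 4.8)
The plan is to establish the two assertions separately, first positivity and then conservation of mass, both exploiting the radial reformulation \eqref{1}. For the conservation of mass, the natural idea is to differentiate $\int_{B_{h(t)}(0)}u(x,t)\,dx$ in $t$ and show the derivative vanishes. In radial coordinates this integral is (up to the constant $\frac{2\pi^{n/2}}{\Gamma(n/2)}$) equal to $\int_0^{h(t)} r^{n-1}\widetilde u(r,t)\,dr$. Differentiating under the integral sign and using Leibniz' rule for the moving upper limit gives
\[
\frac{d}{dt}\int_0^{h(t)} r^{n-1}\widetilde u(r,t)\,dr
= h(t)^{n-1}\widetilde u\big(h(t),t\big)h_t(t) + \int_0^{h(t)} r^{n-1}\widetilde u_t(r,t)\,dr.
\]
Into the last integral I would substitute the divergence-form PDE \eqref{1-1}, namely $r^{n-1}\widetilde u_t=\big(r^{n-1}\widetilde u_r - r^{n-1}\widetilde u\,\widetilde v_r\big)_r$, and integrate; the contribution at $r=0$ dies because of the factor $r^{n-1}$ together with $\widetilde u_r(0,t)=0$, and the contribution at $r=h(t)$ is $h(t)^{n-1}\big(\widetilde u_r(h(t),t) - \widetilde u(h(t),t)\widetilde v_r(h(t),t)\big)$. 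By the fourth equation of \eqref{1} (the flux boundary condition) this equals $-h(t)^{n-1}\widetilde u(h(t),t)h_t(t)$, which cancels exactly the Leibniz boundary term. Hence the derivative is zero and the integral is constant in $t$, equal to its value at $t=0$; this is precisely the claimed identity.

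For the positivity $u>0$, I would argue by a maximum-principle / comparison argument on the parabolic equation satisfied by $\widetilde u$ on the cylindrical-type region $\{0<r<h(t),\,0<t<T\}$. Rewriting the first equation of \eqref{1} as
\[
\widetilde u_t = \widetilde u_{rr} + \Big(\frac{n-1}{r}-\widetilde v_r\Big)\widetilde u_r - \Big(\widetilde v_{rr}+\frac{n-1}{r}\widetilde v_r\Big)\widetilde u,
\]
one sees it is a linear (given $v$) uniformly parabolic equation with bounded coefficients in the interior, to which the strong maximum principle applies. The initial datum is strictly positive, the Neumann-type condition $\widetilde u_r(0,t)=0$ at the symmetry axis is compatible with the reflection that makes $0$ an interior point, and at the free boundary the Robin condition $\widetilde u_r(h(t),t) = -\widetilde k(h(t),t)\widetilde u(h(t),t)$ together with $\widetilde u\ge 0$ and the sign of $\widetilde k$ (ultimately used so that Hopf's lemma cannot be violated) forces $\widetilde u$ not to touch zero from above. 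So if $\widetilde u$ attained the value $0$ somewhere in the parabolic interior it would be identically zero, contradicting $\widetilde u_0>0$; and a boundary zero at $r=h(t)$ would contradict Hopf's lemma given the Robin condition. Therefore $\widetilde u>0$ throughout, i.e. $u>0$.

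The main obstacle I expect is the treatment of the singular coefficient $\tfrac{n-1}{r}$ at $r=0$ together with the free boundary: one must justify the interchange of $\tfrac{d}{dt}$ and the integral and the boundary evaluations at $r=0$ with the only regularity available from the function spaces $X_u^1(T)$, $X_v^2(T)$ (i.e.\ $H^{1,p}$ in $u$, $H^{2,p}$ in $v$, with $p>n$), and to make the maximum-principle argument rigorous one needs enough interior parabolic regularity for $\widetilde u$ on each compact subset of $\{0<r<h(t)\}$, plus control of $h\in C^1$ from the fifth equation of \eqref{1}. Once these regularity points are granted, both computations are short: the mass identity is the Leibniz-plus-divergence cancellation above, and positivity is the strong maximum principle with Hopf's lemma at the Robin boundary.
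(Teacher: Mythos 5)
Your proposal is correct and follows essentially the same route as the paper: the mass identity is obtained exactly as you describe, by integrating the divergence-form equation \eqref{1-1} over $(0,h(t))$ and cancelling the resulting boundary term against the Leibniz term via the free-boundary conditions (note that this cancellation uses the fourth \emph{and} fifth equations of \eqref{1} together, i.e.\ $\widetilde u_r-\widetilde u\widetilde v_r=-\widetilde u(\widetilde k+\widetilde v_r)=-\widetilde u\,h_t$ at $r=h(t)$), and positivity is deduced from the maximum principle. The paper is in fact terser than you are on the positivity step (it simply invokes the ``standard maximal principle'') and does not address the regularity caveats you raise.
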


\begin{proof}
Since $\widetilde{u}_{0}(r)>0$, by standard maximal principle of the parabolic equation, it follows that $\widetilde{u}>0$. Integrating the equation (\ref{1-1}) over $(0,h(t))$, we have
\begin{equation*}
\begin{split}
\int_0^{h(t)} r^{n-1}\widetilde{u}_{t}(r,t)dr=&\int_0^{h(t)}\left(r^{n-1}\widetilde{u}_r(r,t)-r^{n-1}\widetilde{u}(r,t)\widetilde{v}_r(r,t)\right)_rdr\\
=&h^{n-1}(t)\left[\widetilde{u}_r\big(h(t),t\big)-\widetilde{u}\big(h(t),t\big)\widetilde{v}_r\big(h(t),t\big)\right]\\
=&-h^{n-1}(t)\widetilde{u}\big(h(t),t\big)h_t(t),
\end{split}
\end{equation*}
where the fifth equation of (\ref{1}) is used. Thus one has
$$\frac{d}{dt}\int_0^{h(t)}r^{n-1}\widetilde{u}(r,t)dr=\int_0^{h(t)} r^{n-1}\widetilde{u}_{t}(r,t)dr+h_t(t)h^{n-1}(t)\widetilde{u}\big(h(t),t\big)=0,$$
which implies that
\begin{equation*}
\int_0^{h(t)}r^{n-1}\widetilde{u}(r,t)dr=\int_0^{h(0)}r^{n-1}\widetilde{u}(r,0)dr.
\end{equation*}
By virtue of the fact
$$\int_{B_b(0)}u_0(x)dx=\frac{2\pi^{\frac{n}{2}}}{\Gamma(\frac{n}{2})}\int_0^br^{n-1}\widetilde{u}(r,0)dr,$$
we have
$$\int_{B_{h(t)}(0)}u(x,t)dx=\int_{B_b(0)}u_0(x)dx$$
as required. Hence, the proof of the lemma is completed.
\end{proof}

\begin{lemma}\label{lm4}
We have
$$\widetilde{v}_r(h(t),t)=Mh(t)-\frac{M}{h^{n-1}(t)}.$$
\end{lemma}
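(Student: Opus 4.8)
The plan is to solve the elliptic equation for $\widetilde{v}$ explicitly on the interval $0<r<1$ and then evaluate its radial derivative at $r=h(t)$. From the seventh equation of \eqref{1}, namely $\bigl(r^{n-1}\widetilde{v}_r(r,t)\bigr)_r = r^{n-1}c - r^{n-1}\widetilde{u}(r,t)$, I would integrate from $0$ to $r$. Using the regularity condition $\widetilde{v}_r(0,t)=0$ (the eighth equation of \eqref{1}), the boundary term at the origin vanishes, so
\[
r^{n-1}\widetilde{v}_r(r,t) = c\int_0^r s^{n-1}\,ds - \int_0^r s^{n-1}\widetilde{u}(s,t)\,ds = \frac{c}{n}r^n - \int_0^r s^{n-1}\widetilde{u}(s,t)\,ds.
\]

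Next I would specialize to $r=h(t)$. Since $\widetilde{u}(s,t)=0$ for $s>h(t)$ (the second equation of \eqref{1}), the integral $\int_0^{h(t)} s^{n-1}\widetilde{u}(s,t)\,ds$ equals $\int_0^{1} s^{n-1}\widetilde{u}(s,t)\,ds$, which by Lemma~3.1 (the conservation of mass) equals $\int_0^b s^{n-1}\widetilde{u}_0(s)\,ds$. Converting that one-dimensional integral back to the $n$-dimensional one via the identity $\int_{B_b(0)}u_0(x)\,dx = \tfrac{2\pi^{n/2}}{\Gamma(n/2)}\int_0^b s^{n-1}\widetilde{u}(s,0)\,ds$ recorded in the proof of Lemma~3.1, together with the definition $M = \tfrac{\Gamma(n/2)}{2\pi^{n/2}}\int_{B_b(0)}u_0(x)\,dx$ from Theorem~\ref{th1}, gives $\int_0^{h(t)} s^{n-1}\widetilde{u}(s,t)\,ds = M$. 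Plugging in $c = nM$ from the hypothesis of Theorem~\ref{th1} yields $h^{n-1}(t)\widetilde{v}_r(h(t),t) = \tfrac{nM}{n}h^n(t) - M = M h^n(t) - M$, and dividing by $h^{n-1}(t)$ produces the claimed formula $\widetilde{v}_r(h(t),t) = M h(t) - M h^{1-n}(t)$.

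There is no serious obstacle here: the only point demanding care is the bookkeeping that identifies the mass integral with the constant $M$ and correctly matches the normalizing factor $\tfrac{2\pi^{n/2}}{\Gamma(n/2)}$ against the definition of $M$ and of $c$, so that the constant $c=nM$ cancels the mass term cleanly. One should also note that this computation is purely algebraic given the conservation law, and in particular it does not require $h$ to be monotone or even continuous beyond what is needed for the solution to make sense; it holds at every time $t$ for which the solution of \eqref{1} exists.
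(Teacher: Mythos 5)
Your proof is correct and follows essentially the same route as the paper: integrate the radial elliptic equation $\bigl(r^{n-1}\widetilde{v}_r\bigr)_r = r^{n-1}c - r^{n-1}\widetilde{u}$ from $0$ to $h(t)$, use $\widetilde{v}_r(0,t)=0$, and identify $\int_0^{h(t)}s^{n-1}\widetilde{u}(s,t)\,ds = M$ via conservation of mass together with $c=nM$. The extra bookkeeping you spell out (the normalizing constant $\tfrac{2\pi^{n/2}}{\Gamma(n/2)}$ and the vanishing boundary term at the origin) is exactly what the paper leaves implicit.
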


\begin{proof}
Integrating the equation (\ref{1-2}) over $(0,h(t))$, we have
$$h^{n-1}(t)\widetilde{v}_r(h(t),t)=nM\int_0^{h(t)}r^{n-1}dr-\int_0^{h(t)}r^{n-1}\widetilde{u}(r,t)dr.$$
Simple calculation yields
$$\widetilde{v}_r(h(t),t)=Mh(t)-\frac{M}{h^{n-1}(t)}.$$
This completes the proof of Lemma \ref{lm4}.
\end{proof}

\subsection{Some basic properties of the system}

Firstly, we define
$$B=\left\{h\in C(0,T]~\bigg|~h(0)=b,\left|\frac{h(t_1)-h(t_2)}{t_1-t_2}\right|\leqslant M_0,t_1,t_2\in(0,T),t_1\ne t_2\right\},$$
where $M_0<\frac{1}{2}\frac{\min\{b,1-b\}}{T}$ is a constant.

In this section, we shall establish some estimates which are important in the proof of the main result. For any fixed $h(t)\in B$, we consider the following problems
\begin{equation}\label{5}
\begin{cases}
u_{t}=\nabla(\nabla u-u\nabla v),&\text{in}\quad B_{h(t)}(0)\times(0,T),\\
u=0,&\text{in}\quad\big(B_1(0)\setminus \overline{B_{h(t)}(0)}\big)\times(0,T),\\
\displaystyle \frac{\p u}{\p \mathbf{n}}(x,t)+k(x,t)u(x,t)=0,&\text{on}\quad\partial B_{h(t)}(0)\times(0,T),\\
u(x,0)=u_{0}(x),&\text{in}\quad B_b(0),
\end{cases}
\end{equation}
and
\begin{equation}\label{6}
\begin{cases}
0=\Delta v+u-c,&\text{in}\quad B_1(0)\times(0,T),\\
\displaystyle \frac{\partial v}{\partial \mathbf{n}}(x,t)=0,&\text{on}\quad\partial B_1(0)\times(0,T).
\end{cases}
\end{equation}

Concerning the system \eqref{6}, we have the following result.

\begin{lemma}\label{lm2}
If $h(t)\in B$, then for $T>0$ small enough and $u\in X_u^1(T)$, the system \eqref{6} admits a unique solution $v\in X_v^2(T)\cap Y_v(T)$, and we have
\begin{equation}\label{9}
\sup_{0\leqslant t\leqslant T}\left\|v\right\|_{H^{2,p}(B_1(0))}\leqslant C\sup_{0\leqslant t\leqslant T}\|u\|_{L^p(B_{h(t)}(0))}+C.
\end{equation}
where $C$ is independent of $T$ and $h(t)\in B$.
\end{lemma}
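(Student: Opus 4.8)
The plan is to solve the elliptic problem \eqref{6} explicitly via the constant $c=nM$ chosen precisely to enforce the compatibility condition for the Neumann problem, then read off the stated $H^{2,p}$ bound from elliptic regularity. First I would observe that the divergence theorem applied to $0=\Delta v+u-c$ on $B_1(0)$ with the boundary condition $\partial v/\partial\mathbf n=0$ forces $\int_{B_1(0)}(u-c)\,dx=0$; since $u$ vanishes outside $B_{h(t)}(0)$ and $\int_{B_{h(t)}(0)}u\,dx=\int_{B_b(0)}u_0\,dx$ by Lemma~1, this identity reads $\frac{2\pi^{n/2}}{\Gamma(n/2)}M=c|B_1(0)|/n\cdot n/\ldots$, i.e. it is satisfied exactly because $c=nM$ (here one must just check the normalizing constants: $|B_1(0)|=\pi^{n/2}/\Gamma(\frac n2+1)$ and $\int_{B_1(0)}c\,dx=c\,|B_1(0)|$, matching $\int u\,dx$). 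Hence the Neumann problem is solvable, and imposing the mean-zero condition $\frac1{|B_1(0)|}\int_{B_1(0)}v\,dx=0$, i.e. $v\in Y_v(T)$, pins down $v$ uniquely for each fixed $t$.

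Next I would invoke standard $L^p$ elliptic (Agmon–Douglis–Nirenberg / Calderón–Zygmund) theory for the Neumann Laplacian on the smooth bounded domain $B_1(0)$: the solution $v$ with zero mean satisfies
\[
\|v\|_{H^{2,p}(B_1(0))}\leqslant C\bigl(\|u-c\|_{L^p(B_1(0))}+\|v\|_{L^p(B_1(0))}\bigr)\leqslant C\|u-c\|_{L^p(B_1(0))},
\]
where the second inequality uses a Poincaré–Wirtinger inequality together with the mean-zero normalization to absorb the lower-order term, with $C=C(n,p,B_1(0))$ independent of $t$ and of $h$. Since $u$ is supported in $B_{h(t)}(0)$, $\|u-c\|_{L^p(B_1(0))}\leqslant \|u\|_{L^p(B_{h(t)}(0))}+c|B_1(0)|^{1/p}$, which gives \eqref{9} after taking the supremum over $t\in[0,T]$; note $c=nM$ is a fixed constant determined by $u_0$, so it is absorbed into the additive constant $C$. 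Uniqueness in $X_v^2(T)\cap Y_v(T)$ follows because the difference of two solutions is harmonic with zero Neumann data and zero mean, hence identically zero. The measurability/$L^\infty$-in-$t$ membership $v\in X_v^2(T)$ is inherited from $u\in X_u^1(T)$ through the linear (in fact $t$-independent) solution operator and the uniform-in-$t$ estimate just proved; the smallness of $T$ plays no essential role here beyond what is needed for the ambient function spaces to be defined, and could in principle be dropped.

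The only mildly delicate point, and the one I would treat most carefully, is the bookkeeping of the dimensional constants in the compatibility identity — verifying that the prescribed value $c=nM$ with $M=\frac{\Gamma(n/2)}{2\pi^{n/2}}\int_{B_b(0)}u_0\,dx$ is exactly the mean of $u$ over $B_1(0)$, so that the Neumann problem is well-posed; everything after that is a direct citation of elliptic $L^p$ regularity and the Poincaré inequality. A secondary point worth a sentence is that the estimate must be uniform in $h\in B$: this is automatic since the domain $B_1(0)$ on which \eqref{6} is posed does not depend on $h$ at all — $h$ enters only through the support of $u$ — so the elliptic constant is genuinely $h$-independent.
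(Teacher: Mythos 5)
Your proposal is correct and follows essentially the same route as the paper: verify the Neumann compatibility condition $\int_{B_1(0)}(u-c)\,dx=0$ via mass conservation and the choice $c=nM$, then obtain \eqref{9} from $L^p$ elliptic regularity on the fixed domain $B_1(0)$. You simply supply more detail than the paper does (the Poincar\'e--Wirtinger absorption of the lower-order term, uniqueness in $Y_v(T)$, and the observation that the constant is $h$-independent because the elliptic problem is posed on $B_1(0)$), all of which is consistent with the paper's terser argument.
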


\begin{proof}
It is obvious that the problem (\ref{6}) has a solution $v\in H^{1,p}(B_1(0))$ if and only if $\int_{B_1(0)}(u-c)dx=0$ i.e.
\begin{align*}
\int_{B_1(0)}(u-c)dx=&\frac{2\pi^{\frac{n}{2}}}{\Gamma(\frac{n}{2})}\int_0^{h(t)}r^{n-1}\widetilde{u}(r,t)dr-c|B_1(0)|\\
=&\frac{2\pi^{\frac{n}{2}}}{\Gamma(\frac{n}{2})}M-c\frac{2\pi^{\frac{n}{2}}}{n\Gamma(\frac{n}{2})}\\
=&0.
\end{align*}
Moreover, we have
$$\sup_{0\leqslant t\leqslant T}\left\|v\right\|_{H^{2,p}(B_1(0))}\leqslant C\sup_{0\leqslant t\leqslant T}\|u\|_{L^p(B_{h(t)}(0))}+C.$$

From the above estimate, we can easily draw the conclusion.
\end{proof}

Now, we consider the system \eqref{5}. Let $y_i=\frac{x_i}{h(t)},~\mathfrak{u}(y,t)=u(x,t),~\mathfrak{v}(y,t)=v(x,t),~\mathfrak{k}(y,t)=k(x,t)$. Then the system \eqref{5} is equivalent to the following system
\begin{equation*}
\begin{cases}
\displaystyle\mathfrak{u}_t=\frac{1}{h^2(t)}\Delta\mathfrak{u}-\frac{1}{h^2(t)}\nabla\cdot(\mathfrak{u}\nabla\mathfrak{v})+\frac{ h'(t)}{h(t)}y\nabla\mathfrak{u},&\text{in}\quad y\in B_1(0),\quad0<t<T,\\
\displaystyle\frac{\partial\mathfrak{u}}{\partial \mathbf{n}}(y,t)+\mathfrak{k}\big(y,t\big)h(t)\mathfrak{u}(y,t)=0,&\text{on}\quad y\in\p B_1(0),\quad0<t<T,\\
\displaystyle\mathfrak{u}(y,0)=\mathfrak{u}_0(y),&\text{in}\quad y\in B_1(0).
\end{cases}
\end{equation*}
Hence, it can be reduced to the system of integral equation
\begin{equation*}
\mathfrak{u}(y,t)=e^{\alpha(t,0)\Delta}\mathfrak{u}(y,0)+\int_0^t e^{\alpha(t,s)\Delta}A_{\mathfrak{u}\mathfrak{v}}(s)ds,
\end{equation*}
where
$$\alpha(t,s)=\int_s^t \frac{1}{h^2(\tau)}d\tau,$$
\begin{equation*}
A_{\mathfrak{u}\mathfrak{v}}(s)=-\frac{1}{h^2(s)}\nabla\cdot(\mathfrak{u}\nabla\mathfrak{v})+\frac{ h'(s)}{h(s)}y\nabla\mathfrak{u}.
\end{equation*}
Let~$t\geqslant s>0$,~$e^{\alpha(t,s)\Delta}$~represents the operator semigroup on $L^{p}\big(B_1(0)\big)$ which is generated by
\begin{gather*}
L(\tau)=h^{-2}(\tau)\Delta,\\ D\big(L(\tau)\big)=H^{1,p}\big(B_1(0)\big)\cap\left\{\frac{\partial\mathfrak{u}}{\partial \mathbf{n}}(y,t)+\mathfrak{k}\big(y,t\big)h(t)\mathfrak{u}(y,t)=0,y\in\p B_1(0)\right\}.
\end{gather*}
and $e^{\alpha(t,s)\Delta}$ is an holomorphic semigroup on $L^{p}\big(B_1(0)\big)$. If~$h(t)\in B$, then
\begin{equation}\label{8}
0<b-M_0T\leqslant h(t)\leqslant b+M_0T<1,\quad 0\leqslant t\leqslant T.
\end{equation}
The operator-theoretic feature of $e^{\alpha(t,s)\Delta}$ necessary for the proof of this lemma is well known \cite{M.T.2013}. There exists a constant $C>0$ which depends on $M_0$ but independent of $t$ such that
\begin{equation}\label{12}
\|e^{\alpha(t,s)\Delta}f\|_{H^{\lambda+1,p}}\leqslant C(t-s)^{-\frac{1}{2}-\frac{n}{2}(\frac{1}{q}-\frac{1}{p})}\|f\|_{H^{\lambda,q}},\quad f\in H^{\lambda,q},
\end{equation}
where $1\leqslant q\leqslant p\leqslant+\infty$.

Concerning the system \eqref{5} and \eqref{6}, we have the following result.

\begin{lemma}\label{lm3}
If $h(t)\in B,~u_0\in H^{1,p}\big(B_b(0)\big)\cap\{\frac{\p u}{\p \bf{n}}(x,0)+k(x,0)u(x,0)=0,x\in \p B_b(0)\}$ and $v_0\in H^{2,p}\big(B_1(0)\big)\cap\{\frac{\p v}{\p \bf{n}}(x,0)=0,x\in\p B_1(0)\}$. Then for $T$ small enough, the system (\ref{5}) and (\ref{6}) admit a unique solution
$$u\in X_u^1(T),\quad v\in X_v^{2}(T)\cap Y_v(T).$$
Moreover, we have
$$\sup_{0\leqslant t\leqslant t_0}\|u\|_{H^{1,p}(B_{h(t)}(0))}\leqslant 2C\|u(\cdot,0)\|_{H^{1,p}(B_b(0))},$$
where $n<p$.
\end{lemma}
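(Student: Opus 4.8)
The plan is to set up a contraction-mapping scheme on the rescaled integral equation for $\mathfrak{u}$ and use the smoothing estimate (\ref{12}) together with Lemma \ref{lm2} to close it. First I would fix $h(t)\in B$ and, given $\mathfrak{u}\in X_u^1(T)$ (equivalently $u\in X_u^1(T)$), solve (\ref{6}) by Lemma \ref{lm2} to obtain $\mathfrak{v}$ with $\sup_t\|\mathfrak{v}\|_{H^{2,p}(B_1(0))}\le C\sup_t\|\mathfrak{u}\|_{L^p}+C$; this defines the nonlinear term $A_{\mathfrak{u}\mathfrak{v}}(s)=-h^{-2}(s)\nabla\cdot(\mathfrak{u}\nabla\mathfrak{v})+\tfrac{h'(s)}{h(s)}\,y\cdot\nabla\mathfrak{u}$, which lies in $L^p(B_1(0))$ uniformly in $t$ (using $H^{1,p}\hookrightarrow L^\infty$ for $p>n$ to control $\mathfrak{u}\nabla\mathfrak{v}$ in $H^{1,p}$, and $|h'|\le M_0$ for the transport term). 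Then I would define the map
$$
(\mathcal{T}\mathfrak{u})(y,t)=e^{\alpha(t,0)\Delta}\mathfrak{u}_0(y)+\int_0^t e^{\alpha(t,s)\Delta}A_{\mathfrak{u}\mathfrak{v}}(s)\,ds
$$
on the closed ball $\{\mathfrak{u}\in X_u^1(T):\sup_{0\le t\le T}\|\mathfrak{u}\|_{H^{1,p}}\le 2C\|u_0\|_{H^{1,p}(B_b(0))}\}$.

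Second, I would show $\mathcal{T}$ maps this ball into itself. The first term is estimated by boundedness of the holomorphic semigroup: $\|e^{\alpha(t,0)\Delta}\mathfrak{u}_0\|_{H^{1,p}}\le C\|\mathfrak{u}_0\|_{H^{1,p}}\le C\|u_0\|_{H^{1,p}(B_b(0))}$ (the constant coming from (\ref{12}) with $\lambda=0$, $q=p$ and $t-s$ bounded, plus a standard argument that the semigroup preserves the graph norm up to a uniform constant on the time interval, since $b-M_0T\le h(t)\le b+M_0T$ by (\ref{8})). For the Duhamel term I would apply (\ref{12}) with $\lambda=0$: writing $A_{\mathfrak{u}\mathfrak{v}}=-h^{-2}\nabla\cdot(\mathfrak{u}\nabla\mathfrak{v})$ as a divergence lets me absorb one derivative into the semigroup, i.e. estimate $\|e^{\alpha(t,s)\Delta}\nabla\cdot(\mathfrak{u}\nabla\mathfrak{v})\|_{H^{1,p}}\le C(t-s)^{-1/2-n/2(1/q-1/p)}\|\mathfrak{u}\nabla\mathfrak{v}\|_{?}$; choosing $q$ appropriately (e.g. $q=p$, giving exponent $-1/2$, which is integrable) and bounding $\|\mathfrak{u}\nabla\mathfrak{v}\|_{H^{?,p}}\le C\|\mathfrak{u}\|_{H^{1,p}}\|\mathfrak{v}\|_{H^{2,p}}\le C'\|\mathfrak{u}\|_{H^{1,p}}(\|\mathfrak{u}\|_{H^{1,p}}+1)$ via the Banach-algebra property of $H^{1,p}$ for $p>n$. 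The transport term $\tfrac{h'}{h}y\cdot\nabla\mathfrak{u}$ is handled by (\ref{12}) with $\lambda=0$ using $\|y\cdot\nabla\mathfrak{u}\|_{L^p}\le C\|\mathfrak{u}\|_{H^{1,p}}$ and $|h'/h|\le M_0/(b-M_0T)$. Altogether $\sup_{0\le t\le T}\|\mathcal{T}\mathfrak{u}\|_{H^{1,p}}\le C\|u_0\|_{H^{1,p}(B_b(0))}+C(R)\,T^{1/2}$ on the ball of radius $R=2C\|u_0\|_{H^{1,p}(B_b(0))}$, so choosing $T$ small makes the second summand $\le C\|u_0\|_{H^{1,p}(B_b(0))}$; self-mapping follows. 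Checking that $\mathcal{T}\mathfrak{u}$ satisfies the Robin boundary condition (membership in $X_u^1(T)$) is automatic since $e^{\alpha(t,s)\Delta}$ has range in $D(L(\tau))$, which encodes exactly that condition.

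Third, I would prove $\mathcal{T}$ is a contraction: for $\mathfrak{u}_1,\mathfrak{u}_2$ in the ball with associated $\mathfrak{v}_1,\mathfrak{v}_2$, linearity of (\ref{6}) gives $\|\mathfrak{v}_1-\mathfrak{v}_2\|_{H^{2,p}}\le C\|\mathfrak{u}_1-\mathfrak{u}_2\|_{L^p}$, and then the same estimates applied to the bilinear differences $\mathfrak{u}_1\nabla\mathfrak{v}_1-\mathfrak{u}_2\nabla\mathfrak{v}_2=(\mathfrak{u}_1-\mathfrak{u}_2)\nabla\mathfrak{v}_1+\mathfrak{u}_2\nabla(\mathfrak{v}_1-\mathfrak{v}_2)$ and to $\tfrac{h'}{h}y\cdot\nabla(\mathfrak{u}_1-\mathfrak{u}_2)$ yield $\sup_t\|\mathcal{T}\mathfrak{u}_1-\mathcal{T}\mathfrak{u}_2\|_{H^{1,p}}\le C(R)\,T^{1/2}\sup_t\|\mathfrak{u}_1-\mathfrak{u}_2\|_{H^{1,p}}$, a contraction for $T$ small. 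The Banach fixed point theorem then gives a unique $\mathfrak{u}\in X_u^1(T)$, hence a unique $u\in X_u^1(T)$, and $v\in X_v^2(T)\cap Y_v(T)$ from Lemma \ref{lm2}; undoing the scaling $y=x/h(t)$ recovers solutions of (\ref{5})–(\ref{6}), and the self-mapping bound is precisely the stated estimate $\sup_{0\le t\le t_0}\|u\|_{H^{1,p}(B_{h(t)}(0))}\le 2C\|u(\cdot,0)\|_{H^{1,p}(B_b(0))}$. I expect the main obstacle to be the bookkeeping of the time-dependent (non-autonomous, because $h$ and hence $L(\tau)=h^{-2}(\tau)\Delta$ vary) semigroup $e^{\alpha(t,s)\Delta}$: one must verify that the smoothing estimate (\ref{12}) holds with constants uniform over $h\in B$ — this is where (\ref{8}) and the bound $M_0<\tfrac12\min\{b,1-b\}/T$ are used to keep $h$ bounded away from $0$ and $1$ — and that the domain $D(L(\tau))$ (with its $t$-dependent Robin condition through $\mathfrak{k}(y,t)h(t)$) is stable enough that the evolution family is well-defined; the divergence-form structure of the chemotaxis nonlinearity, which spends only a half power of $(t-s)$, is what ultimately makes the time integral converge and the contraction possible.
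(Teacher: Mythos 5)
Your proposal is correct and follows essentially the same route as the paper: fix $h\in B$, solve the elliptic equation via Lemma \ref{lm2} to define the chemotactic drift, set up the Duhamel map (\ref{11}) for the rescaled unknown on the ball of radius $2C\|u_0\|_{H^{1,p}(B_b(0))}$, and close both the self-mapping and contraction steps with the smoothing estimate (\ref{12}) (the $(t-s)^{-1/2}$ gain on the divergence-form term), the algebra property of $H^{1,p}$ for $p>n$, and the bilinear splitting $(\mathfrak{w}_1-\mathfrak{w}_2)\nabla\mathfrak{v}_1+\mathfrak{w}_2\nabla(\mathfrak{v}_1-\mathfrak{v}_2)$. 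The paper's proof is exactly this argument, with the same decomposition into three terms and the same choice of $\mathcal{M}$ and small $T$.
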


\begin{proof}
Firstly, we consider $w\in X_u^1(T)$ and $w(x,0)=u_0(x)$. Let $v=v(w)$ denote the corresponding solution of the equation
\begin{equation}\label{14}
\begin{cases}
0=\Delta v+w-c,&\text{in}\quad B_1(0)\times(0,T),\\
\displaystyle \frac{\partial v}{\partial \mathbf{n}}(x,t)=0,&\text{on}\quad\partial B_1(0)\times(0,T).
\end{cases}
\end{equation}
By Lemma \ref{lm2}, we have
$$\sup_{0\leqslant t\leqslant T}\left\|v\right\|_{H^{2,p}(B_1(0))}\leqslant C\sup_{0\leqslant t\leqslant T}\|w\|_{L^p(B_{h(t)}(0))}+C.$$

Secondly, for the solution $v$ of equation (\ref{14}), we define $u=u\big(v(w)\big)$ to be the corresponding solution of
\begin{equation*}
\begin{cases}
\displaystyle\mathfrak{u}_t=\frac{1}{h^2(t)}\Delta\mathfrak{u}-\frac{1}{h^2(t)}\nabla\cdot(\mathfrak{w}\nabla\mathfrak{v})+\frac{ h'(t)}{h(t)}y\nabla\mathfrak{w},&\text{in}\quad y\in B_1(0),\quad0<t<T,\\
\displaystyle\frac{\partial\mathfrak{u}}{\partial \mathbf{n}}(y,t)+\mathfrak{k}\big(y,t\big)h(t)\mathfrak{u}(y,t)=0,&\text{on}\quad y\in\p B_1(0),\quad0<t<T,\\
\displaystyle\mathfrak{u}(y,0)=\mathfrak{u}_0(y),&\text{in}\quad y\in B_1(0).
\end{cases}
\end{equation*}

Let
$$B(\mathcal{M},T)=\{w\in X_u^1\mid w(x,0)=u_0(x),~\sup_{0\leqslant t\leqslant T}\|\mathfrak{w}(\cdot,t)\|_{H^{1,p}(B_1(0))}\leqslant \mathcal{M}\},$$
and we define a mapping $Fw=u\big(v(w)\big)$ as follows
\begin{equation}\label{11}
\mathfrak{u}(y,t)=e^{\alpha(t,0)\Delta}\mathfrak{u}(y,0)+\int_0^t e^{\alpha(t,s)\Delta}A_{\mathfrak{w},\mathfrak{v}}(s)ds.
\end{equation}
Then local existence will be established via contraction mapping principle.

{\bf Step 1:} $F$ maps $B(\mathcal{M},T)$ into itself.

Noticing \eqref{11}, we have
\begin{equation}\label{13}
\begin{aligned}
\left\|\mathfrak{u}(\cdot,t)\right\|_{H^{1,p}}\leqslant&\left\|e^{\alpha(t,0)\Delta}\mathfrak{u}(\cdot,0)\right\|_{H^{1,p}}+\int_0^{t} \left\|e^{\alpha(t,s)\Delta}\frac{1}{h^2(s)}\nabla\cdot\big(\mathfrak{w}\nabla\mathfrak{v}\big)\right\|_{H^{1,p}} ds\\
&\quad\quad+\int_0^{t} \left\|e^{\alpha(t,s)\Delta}\frac{ h'(s)}{h(s)}y\nabla\mathfrak{w}\right\|_{H^{1,p}}ds\\
=&(S_1)+(S_2)+(S_3).
\end{aligned}
\end{equation}
Using the facts \eqref{8} and \eqref{12}, the terms of the right-hand side of \eqref{13} are estimated from above by
$$(S_1)\leqslant C\left\|\mathfrak{u}(\cdot,0)\right\|_{H^{1,p}}\leqslant C\left\|\mathfrak{u}_0\right\|_{H^{1,p}};$$
\begin{equation*}
\begin{aligned}
(S_2)&\leqslant C\int_0^{t}(t-s)^{-\frac{1}{2}}\left\|\mathfrak{w}\nabla\mathfrak{v}\right\|_{H^{1,p}}ds\\
&\leqslant Ct^{\frac{1}{2}}\sup_{0\leqslant s\leqslant T}\left\|\mathfrak{w}\right\|_{H^{1,p}}\cdot\sup_{0\leqslant s\leqslant T}\left\|\mathfrak{v}\right\|_{H^{2,p}};
\end{aligned}
\end{equation*}
and
$$(S_3)\leq C\int_0^{t}\left[1+(t-s)^{-\frac{1}{2}}\right]\left\|y\nabla\mathfrak{w}\right\|_{L^{p}}ds\leqslant C\left(t+t^{\frac{1}{2}}\right)\sup_{0\leqslant s\leqslant T}\left\|\mathfrak{w}\right\|_{H^{1,p}}.$$
For $w\in B(\mathcal{M},T)$ this implies
\begin{equation*}
\left\|\mathfrak{u}(\cdot,t)\right\|_{H^{1,p}}\leqslant C\left\|\mathfrak{u}_0\right\|_{H^{1,p}}+CT^{\frac{1}{2}}\mathcal{M}(\mathcal{M}+1)+C\left(T+T^{\frac{1}{2}}\right)\mathcal{M}.
\end{equation*}
If we take $\mathcal{M}>0$ as large as
$$C\left\|\mathfrak{u}_0\right\|_{H^{1,p}}\leqslant\frac{\mathcal{M}}{2},$$
and then take $T>0$ as small as
$$CT^{\frac{1}{2}}\mathcal{M}(\mathcal{M}+1)+C\left(T+T^{\frac{1}{2}}\right)\mathcal{M}\leqslant\frac{\mathcal{M}}{2},$$
it holds that
$$\sup_{0\leqslant s\leqslant T}\|\mathfrak{u}\|_{H^{1,p}}\leqslant \mathcal{M},$$
i.e. $F$ maps $B(\mathcal{M},T)$ into itself.

{\bf Step 2:} $F$ is a contract mapping.

For $w_1,w_2\in B(\mathcal{M},T)$, we can construct $v_1,v_2$ as above and it holds that
\begin{equation*}
\begin{cases}
0=\Delta (v_1-v_2)+(w_1-w_2),&\text{in}\quad B_1(0)\times(0,T),\\
\displaystyle \frac{\partial (v_1-v_2)}{\partial \mathbf{n}}(x,t)=0,&\text{on}\quad\partial B_1(0)\times(0,T).
\end{cases}
\end{equation*}
Similar to the proof of Lemma \ref{lm2}, we can obtain that for $T>0$ small enough
\begin{equation}\label{17}
\sup_{0\leqslant t\leqslant T}\left\|v_1-v_2\right\|_{H^{2,p}(B_1(0))}\leqslant C\sup_{0\leqslant t\leqslant T}\|w_1-w_2\|_{L^p(B_{h(t)}(0))}.
\end{equation}
Let $u_1$, $u_2$ denote the corresponding solution of \eqref{11} respectively. Then the difference $u_1-u_2$ satisfies
\begin{equation*}
\begin{cases}
\displaystyle(\mathfrak{u}_1-\mathfrak{u}_2)_t=\dfrac{1}{h^2(t)}\Delta(\mathfrak{u}_1-\mathfrak{u}_2)\\
\quad-\dfrac{1}{h^2(t)}\nabla\cdot[(\mathfrak{w}_1-\mathfrak{w}_2)\nabla\mathfrak{v}_1]\\
\displaystyle\quad-\frac{1}{h^2(t)}\nabla\cdot[\mathfrak{w}_2\nabla(\mathfrak{v}_1-\mathfrak{v}_2)]+\frac{ h'(t)}{h(t)}y\nabla(\mathfrak{w}_1-\mathfrak{w}_2),&\quad y\in B_1(0),\quad0<t<t_0,\\
\displaystyle\frac{\partial(\mathfrak{u}_1-\mathfrak{u}_2)}{\partial \mathbf{n}}(y,t)+\mathfrak{k}\big(y,t\big)h(t)(\mathfrak{u}_1-\mathfrak{u}_2)(y,t)=0,&\quad y\in\p B_1(0),\quad0<t<t_0,\\
\displaystyle(\mathfrak{u}_1-\mathfrak{u}_2)(y,0)=0,&\quad y\in B_1(0).
\end{cases}
\end{equation*}
We have
\begin{align*}
\|\mathfrak{u}_1-\mathfrak{u}_2\|_{H^{1,p}}\leqslant&\int_0^{t} \left\|e^{\alpha(t,s)\Delta}\frac{1}{h^2(s)}\nabla\cdot\left[(\mathfrak{w}_1-\mathfrak{w}_2)\nabla\mathfrak{v}_1\right]\right\|_{H^{1,p}} ds\\
&\quad+\int_0^{t} \left\|e^{\alpha(t,s)\Delta}\frac{1}{h^2(s)}\nabla\cdot\left[\mathfrak{w}_2\nabla(\mathfrak{v}_1-\mathfrak{v}_2)\right]\right\|_{H^{1,p}}\\
&\quad\quad+\int_0^{t} \left\|e^{\alpha(t,s)\Delta}\frac{ h'(s)}{h(s)}y\nabla(\mathfrak{w}_1-\mathfrak{w}_2)\right\|_{H^{1,p}}ds\\
\leqslant& C\int_0^{t}(t-s)^{-\frac{1}{2}}\left\|(\mathfrak{w}_1-\mathfrak{w}_2)\nabla\mathfrak{v}_1\right\|_{H^{1,p}}ds\\
&\quad+C\int_0^{t}(t-s)^{-\frac{1}{2}}\left\|\mathfrak{w}_2\nabla(\mathfrak{v}_1-\mathfrak{v}_2)\right\|_{H^{1,p}}ds\\
&\quad\quad+C\int_0^{t}\left[1+(t-s)^{-\frac{1}{2}}\right]\|y\nabla(\mathfrak{w}_1-\mathfrak{w}_2)\|_{L^{p}}ds.
\end{align*}
For $w_1,w_2\in B(\mathcal{M},T)$ this implies
\begin{align*}
\|\mathfrak{u}_1-\mathfrak{u}_2\|_{H^{1,p}}\leqslant& CT^{\frac{1}{2}}\sup_{0\leqslant s\leqslant T}\|\mathfrak{v}_1\|_{H^{2,p}}\sup_{0\leqslant s\leqslant T}\|\mathfrak{w}_1-\mathfrak{w}_2\|_{H^{1,p}}\\
&\quad+CT^{\frac{1}{2}}\sup_{0\leqslant s\leqslant T}\|\mathfrak{w}_2\|_{H^{1,p}}\sup_{0\leqslant s\leqslant T}\|\mathfrak{v}_1-\mathfrak{v}_2\|_{H^{2,p}}\\
&\quad\quad+C\left(T+T^{\frac{1}{2}}\right)\sup_{0\leqslant s\leqslant T}\|\mathfrak{w}_1-\mathfrak{w}_2\|_{H^{1,p}}\\
\leqslant& CT^{\frac{1}{2}}(\mathcal{M}+1)\sup_{0\leqslant s\leqslant T}\|\mathfrak{w}_1-\mathfrak{w}_2\|_{H^{1,p}}\\
&\quad+CT^{\frac{1}{2}}\mathcal{M}\sup_{0\leqslant s\leqslant T}\|\mathfrak{w}_1-\mathfrak{w}_2\|_{H^{1,p}}\\
&\quad\quad+C\left(T+T^{\frac{1}{2}}\right)\sup_{0\leqslant s\leqslant T}\|\mathfrak{w}_1-\mathfrak{w}_2\|_{H^{1,p}}.
\end{align*}
Taking $T>0$ as small as
$$CT^{\frac{1}{2}}(\mathcal{M}+1)+CT^{\frac{1}{2}}\mathcal{M}+C\left(T+T^{\frac{1}{2}}\right)\leqslant\frac{1}{2},$$
we obtain $F$ is a contract mapping. This completes the proof of the lemma.
\end{proof}

\section{The existence of the solution locally in time}\label{sec4}

Having at hand the preliminary material collected above, we are now prepared to prove our main
result on local in time existence of solutions. We prove Theorem \ref{th1} by using Lemma \ref{lm2} and Lemma \ref{lm3}.

For each $h(t)\in B$, by Lemma \ref{lm3} we know that there exists a pair
$$u\in X_u^1(T),\quad v\in X_v^{2}(T)\cap Y_v(T),$$
which is the solution of the system \eqref{5} and \eqref{6}.

The following, we will use the contraction mapping principle to show the existence of the free boundary $r=h(t)$.

Set
\begin{equation*}
g(t)=b+\int_0^tk(x,s)ds+\int_0^t\frac{\p v}{\p \mathbf{n}}(x,s)ds,\quad x\in \p B_{h(t)}(0),
\end{equation*}
then we have
\begin{equation}\label{18}
\begin{aligned}
\left|\frac{g(t_1)-g(t_2)}{t_1-t_2}\right|\leq&\left|\frac{1}{t_1-t_2}\right|\int_{t_1}^{t_2}\left|k\big(x,s\big)\right|ds+\left|\frac{1}{t_1-t_2}\right|\int_{t_1}^{t_2}\left|\frac{\p v}{\p \mathbf{n}}(x,s)\right|ds\\
\leqslant&\sup_{0\leqslant t\leqslant T}\|k(\cdot,t)\|_{L^\infty}+C\sup_{0\leqslant t\leqslant T}\|\nabla v(\cdot,t)\|_{L^\infty}\\
\leqslant&C+C\|u(\cdot,0)\|_{H^{2,p}(B_b(0))}.
\end{aligned}
\end{equation}
Let $M_1$ denote the constant at the right hand of (\ref{18}). If $T$ is small enough, then
$$\frac{1}{2}\frac{\min\{b,1-b\}}{T}>M_1.$$
Take $M_0=M_1$ in $B$, it is clear that $B\subset C[0,T]$.

Define $G:~h(t)\rightarrow g(t)$, therefore $G$ maps $B$ into itself. Next we will demonstrate
that $G$ is contractive. Then the fixed point theorem yields that there exist a pair $(u,v)$ and
a curve $\Gamma:~r=h(t)$ which are the solution of (\ref{1}).

For $h_1(t),~h_2(t)\in B$, let $(u_1,v_1),~(u_2,v_2)$ represent the corresponding solutions of \eqref{101} respectively, then
\begin{align*}
&G(h_1)-G(h_2)=g_1(t)-g_2(t)\\
=&\int_0^t\left(k(x,s)-k(y,s)\right)ds+\int_0^t\Big(\frac{\p v_1}{\p \mathbf{n}}(x,s)-\frac{\p v_2}{\p \mathbf{n}}(y,s)\Big)ds\\
=&(I_1)+(I_2),
\end{align*}
where $x\in\p B_{h_1(t)}(0),~y\in\p B_{h_2(t)}(0)$.

The first term $(I_1)$ on the right-hand side are estimated from above by
$$(I_1)\leqslant L\int_0^t\big|h_{1}(s)-h_{2}(s)\big|ds\leqslant Lt\sup_{0\leqslant s\leqslant T}|h_1-h_2|.$$
By Lemma \ref{lm4}, the second term $(I_2)$ on the right-hand side are estimated from above by
\begin{align*}
(I_2)=&\int_0^t\left|\frac{\p \widetilde{v_1}}{\p r}(h_1(s),s)-\frac{\p \widetilde{v_2}}{\p r}(h_2(s),s)\right|ds\\
\leqslant&\frac{c}{n}\int_0^t|h_1(s)-h_2(s)|ds+M\int_0^t\left|\frac{1}{h_1^{n-1}(s)}-\frac{1}{h_2^{n-1}(s)}\right|ds\\
\leqslant& Ct\sup_{0\leqslant s\leqslant T}|h_1-h_2|.
\end{align*}

Hence the fixed point theorem yields that there exist a pair $(u,v)$ and a curve $\Gamma:~r=h(t)\in B$ which are the solution of (\ref{1}).

\section{Proofs of Theorem \ref{th2} and Theorem \ref{th3}}\label{sec5}

In the section, we study the property of the free boundary and give the proofs for Theorem \ref{th2} and Theorem \ref{th3}.

\begin{proof}[The proof of Theorem \ref{th2}]
Let $(u,v)\in(X_u^{1}(T))\times (X_{v}^{2}(T)\cap Y_v(T))$ and a curve $\Gamma:~r=h(t)\in B$ be the solution of (\ref{1}). We have
\begin{equation*}
\begin{cases}
h_t(t)=\widetilde{k}(h(t),t)+\widetilde{v}_r(h(t),t),\\
h(0)=b.
\end{cases}
\end{equation*}
According to the assumption and Lemma \ref{lm4}, we get
\begin{equation*}
\begin{cases}
h_t(t)=(a+M)h(t)-\frac{M}{h^{n-1}(t)},\\
h(0)=b.
\end{cases}
\end{equation*}
Let $s(t)=h^{n}(t)$, then it holds that
\begin{equation*}
\begin{cases}
s_t(t)=n(a+M)s(t)-nM,\\
s(0)=b^n.
\end{cases}
\end{equation*}
A direct calculation gives the following unique solution:
$$s(t)=\left(b^n-\frac{M}{a+M}\right)e^{n(a+M)t}+\frac{M}{a+M}.$$
Hence, we obtain
$$h(t)=\left[\left(b^n-\frac{M}{a+M}\right)e^{n(a+M)t}+\frac{M}{a+M}\right]^{\frac{1}{n}}.$$
Thus, we can deduce all the results in Theorem \ref{th2} easily.
\end{proof}

\begin{proof}[The proof of Theorem \ref{th3}]
If $M=\frac{ab^n}{1-b^n}$, then $h(t)\equiv b$. Thus by the extensive method, we can get the global existence for the solution of the free boundary problem \eqref{101}.

If $M<\frac{ab^n}{1-b^n}$, then the result of Theorem \ref{th2} tells us that $h(t)$ is increasing and $h(t)\to 1$ when $t\to \frac{1}{n(a+M)}\ln\frac{a}{(a+M)b^n-M}$. Since we know that $0\leqslant h(t)\leqslant 1$, which means $h(t)\equiv 1$ when $t\geqslant\frac{1}{n(a+M)}\ln\frac{a}{(a+M)b^n-M}$. Thus, it is similar that
 we can get the global existence for the solution of the free boundary problem \eqref{101}.

Also from Theorem \ref{th2}, we know that if $M>\frac{ab^n}{1-b^n}$, then $0<h(t)<1$ for each $0<t<T^*=\frac{1}{n(a+M)}\ln\frac{M}{(1-b^n)M-ab^n}$. Then, applying the extensive method, we can assert that the problem \eqref{101} has a solution for each $t\in(0,T^*)$.

Notice that $h(t)\rightarrow 0$ as $t\rightarrow T^*$ and in the outside of $B_{h(t)}(0)$, the solution $u\equiv0$. Thus we have
$$u(x,t)\rightarrow \frac{2\pi^{\frac{n}{2}}}{\Gamma(\frac{n}{2})}M\delta(x)~\text{as}~t\rightarrow T^*,$$
which implies that the chemotaxis model \eqref{101} collapses in a finite time $t\to T^*=\frac{1}{n(a+M)}\ln\frac{M}{(1-b^n)M-ab^n}$. Theorem \ref{th3} is proved.
\end{proof}

\section{Appendix}

In this section, let us recall the construction of the system. All of the material here can be
found in \cite{C-W2012}.

Let $\Omega\subset \mathbb{R}^{n}$ be a bounded open domain and $\Omega_{0}\subset\Omega$ be an open sub-domain. Assume a population density $u(x,0)$ occupying the domain $\Omega_{0}$, and in the outside of $\Omega_{0}$ the population density $u(x,0)\equiv0$ and the external signal $v$ occupying $\Omega$. For $t>0$, $u(x,t)$ spreads to domain $\Omega_{t}\subset\Omega$. Let $\partial\Omega_{t}$ denote the boundary of $\Omega_{t}$ and $n_{t}$ denote the outer normal vector of $\partial\Omega_{t}$, then $\Gamma_{t}=\partial\Omega_{t}\times(0,T)$ is the free boundary.

The spatial diffusion of species is related to the free boundary of $\Omega_{t}$ at the time $t\geqslant 0$. Observe the flux is increasing with respect to the density of the species, so it would be reasonable to suppose that flux is proportional to the density. Thus we have following flux condition on $\partial\Omega_{t}$,
\begin{equation*}
\begin{split}
-\nabla u\cdot n_{t}=k(x,t)u&\qquad\text{on}\quad\partial\Omega_{t}
\end{split}
\end{equation*}
where $k(x,t)$ is positive function, and $\frac{1}{k(x,t)}>0$ is mass flow ratio.

On the other hand, noticing that the total flux on $\partial\Omega_{t}$ is
\begin{equation*}
\begin{split}
j=-\nabla u\cdot n_{t}+\chi u\nabla v\cdot n_{t},
\end{split}
\end{equation*}
By conservation of population, one has
\begin{equation}\label{1-6}
uv_{n_{t}}=-\nabla u\cdot n_{t}+\chi u\nabla v\cdot n_{t}\quad\text{on}\quad\partial\Omega_{t},
\end{equation}
where $v_{n_{t}}$ is the normal diffusion velocity of $\partial\Omega_{t}$. Assume $\Gamma_{t}:\Phi(x,t)=0$, then
\begin{equation}\label{1-3}
v_{n_{t}}=\left(\frac{dx_{1}}{dt},\frac{dx_{2}}{dt},\cdot\cdot\cdot,\frac{dx_{n}}{dt}\right)\cdot n_{t}=
\left(\frac{dx_{1}}{dt},\frac{dx_{2}}{dt},\cdot\cdot\cdot,\frac{dx_{n}}{dt}\right)\cdot\frac{\nabla\Phi}{|\nabla\Phi|},
\end{equation}
where $x=(x_{1},x_{2},\cdot\cdot\cdot,x_{n})$ and $\nabla=(\frac{\partial}{\partial x_{1}},\frac{\partial}{\partial x_{2}},\cdot\cdot\cdot,\frac{\partial}{\partial x_{n}})$.

Notice that
\begin{equation}\label{1-4}
\frac{\partial\Phi}{\partial t}+\frac{\partial\Phi}{\partial x_{1}}\cdot\frac{dx_{1}}{dt}+\frac{\partial\Phi}{\partial x_{2}}\cdot\frac{dx_{2}}{dt}+\cdot\cdot\cdot+\frac{\partial\Phi}{\partial x_{n}}\cdot\frac{dx_{n}}{dt}=0,
\end{equation}
thus \eqref{1-3} and (\ref{1-4}) give
\begin{equation}\label{1-5}
v_{n_{t}}=-\frac{1}{|\nabla\Phi|}\cdot\frac{\partial\Phi}{\partial t}.
\end{equation}
Combining (\ref{1-6}) and (\ref{1-5}),we obtain
\begin{equation*}
u\frac{\partial\Phi}{\partial t}=\nabla u\cdot\nabla\Phi-\chi u\nabla v\cdot\nabla\Phi\quad\text{on}\quad\partial\Omega_{t}.
\end{equation*}
At last we obtain the conditions of the free boundary $\Gamma_{t}$
\begin{equation*}
\begin{split}
-\nabla u\cdot\frac{\nabla\Phi}{|\nabla\Phi|}=ku&\quad\text{on}\quad\partial\Omega_{t},
\end{split}
\end{equation*}
and
\begin{equation*}
\begin{split}
u\frac{\partial\Phi}{\partial t}=\nabla u\cdot\nabla\Phi-\chi u\nabla v\cdot\nabla\Phi&\quad\text{on}\quad\partial\Omega_{t}.
\end{split}
\end{equation*}
Therefore the full free boundary problem reads
\begin{equation*}
\begin{cases}
u_{t}=\nabla(\nabla u-\chi u\nabla v)&\text{in}\quad\Omega_{t}\times(0,T),\\
u=0&\text{in}\quad\Omega\times(0,T)\setminus\overline{\Omega_{t}}\times(0,T),\\
-\nabla u\cdot\dfrac{\nabla\Phi}{|\nabla\Phi|}=ku&\text{on}\quad\partial\Omega_{t}\times(0,T),\\
u\dfrac{\partial\Phi}{\partial t}=\nabla u\cdot\nabla\Phi-\chi u\nabla v\cdot\nabla\Phi&\text{on}\quad\partial\Omega_{t}\times(0,T),\\
u(x,0)=u_{0}(x)&\text{in}\quad\Omega_{0},\\
0=\Delta v+u-c&\text{in}\quad\Omega\times(0,T),\\
\dfrac{\partial v}{\partial \mathbf{n}}=0&\text{on}\quad\partial\Omega\times(0,T),
\end{cases}
\end{equation*}
where $\Gamma_{t}:\Phi(x,t)=0$ is the free boundary.

\section*{Acknowledgments} The authors of this paper would like to thank the referee for the comments and helpful suggestions.


\medskip
Received February 2018; revised March 2018.
\medskip

\end{document}